\newcommand{\monthyear}[1]{%
  \def\@monthyear{\uppercase{#1}}}
\newcommand{\volnumber}[1]{%
  \def\@volnumber{\uppercase{#1}}}
\def\ps@plain{\ps@empty
  \def\@oddfoot{\@monthyear \hfil \thepage}%
  \def\@evenfoot{\thepage \hfil \@volnumber}}
\def\ps@firstpage{\ps@plain}
\def\ps@headings{\ps@empty
  \def\@evenhead{%
    \setTrue{runhead}%
    \def\thanks{\protect\thanks@warning}%
    \uppercase{The Fibonacci Quarterly}\hfil}%
  \def\@oddhead{%
    \setTrue{runhead}%
    \def\thanks{\protect\thanks@warning}%
    \hfill\uppercase{Hypergeometric Template}}%
  \let\@mkboth\markboth
  \def\@evenfoot{%
    \thepage \hfil \@volnumber}%
  \def\@oddfoot{%
    \@monthyear \hfil \thepage}%
  }%
\newcommand{\N}{{\mathbb N}}
\newcommand{\Z}{{\mathbb Z}}
\theoremstyle{plain}
\numberwithin{equation}{section}
\newtheorem{thm}{Theorem}[section]
\newtheorem{theorem}[thm]{Theorem}
\newtheorem{example}[thm]{Example}
\newtheorem{proposition}[thm]{Proposition}
\newtheorem{corollary}[thm]{Corollary}
\newtheorem{remark}[thm]{Remark}
\begin{document}
\monthyear{Month Year}
\volnumber{Volume, Number}
\setcounter{page}{1}

\title{Divisibility properties of factors of the discriminant of generalized Fibonacci numbers}
\author{Yao-Qiang Li}
\address{Institut de Math\'ematiques de Jussieu - Paris Rive Gauche\\
         Sorbonne Universit\'e - Campus Pierre et Marie Curie\\
         Paris, 75005\\
         France}
\email{yaoqiang.li@imj-prg.fr\quad yaoqiang.li@etu.upmc.fr}
\thanks{The author thanks Prof. Jean-Paul Allouche for his advices, Dr. Shuo Li for discussions, the referee for useful suggestions, and the Oversea Study Program of Guangzhou Elite Project (GEP) for financial support (JY201815).}
\address{School of Mathematics\\
         South China University of Technology\\
         Guangzhou, 510641\\
         P.R. China}
\email{scutyaoqiangli@gmail.com\quad scutyaoqiangli@qq.com}

\begin{abstract}
We study some divisibility properties related to the factors of the discriminant of the characteristic polynomial of generalized Fibonacci sequences $(G_n)_{n\ge0}$ defined by $G_0=0$, $G_1=1$ and $G_n=pG_{n-1}+qG_{n-2}$ for $n\ge2$, where $p,q$ are given integers. As corollaries, we give some divisibility properties on some well known sequences.
\end{abstract}

\maketitle

\section{Introduction}
Let $\N$ be the set of positive integers $1,2,3,\cdots$ and $\Z$ be the set of all integers. Given $p,q\in\Z$, the \textit{$<p,q>$-Fibonacci sequence} $(G_n)_{n\ge0}$ is defined by
$$G_0=0,\quad G_1=1\quad\text{and}\quad G_n=pG_{n-1}+qG_{n-2}\quad\text{for all }n\ge2.$$
For rich applications of these sequences in science and nature, see for examples \cite{K04,K19,V89}.

Let $r=r(p,q):=p^2+4q$ be the discriminant of the characteristic polynomial $x^2-px-q$ of the $<p,q>$-Fibonacci sequence $(G_n)_{n\ge0}$. For the classical Fibonacci sequence $(F_n)_{n\ge0}$ ($p=q=1$), it was shown by Kuipers and Shiue \cite{KS72} that the only moduli for which $(F_n)_{n\ge0}$ can possibly be uniformly distributed are the powers of the discriminant $r=5$. Soon Niederreiter \cite{N72} proved that $(F_n)_{n\ge0}$ is uniformly distributed mod all powers of $5$. Later the results of Niederreiter and Shiue \cite{NS77,NS80} on uniform distribution of linear recurring sequences in finite fields lead to the observation (see \cite{H04}) that over the integers, a linear recurring sequence can be uniformly distributed mod $s$ (and mod $s^k$) only if $s$ divides the discriminant of the characteristic polynomial. These results motivate the investigation of the divisors (and their powers) of the discriminant of the characteristic polynomial of generalized Fibonacci sequences in this paper. For more overviews on uniform distribution of linear recurring sequences, we refer the reader to \cite{H04}.

Throughout this paper, for $n,m\in\Z$, we use $n\mid m$, $n\nmid m$ and $(n,m)$ to denote that $n$ divides $m$ (i.e., there exists $k\in\Z$ such that $m=kn$), $n$ does not divide $m$ and the greatest common divisor of $n$ and $m$ (if $n\neq0$ or $m\neq0$) respectively. Note that $0\mid0$. An integer sequence $(a_i)_{i\ge0}$ with the property that
$$n\mid m\quad\text{implies}\quad a_n\mid a_m\quad\text{for all }n,m\ge0$$
is called a \textit{divisibility sequence}. For all $p,q\in\Z$, by induction it is not difficult to prove that the $<p,q>$-Fibonacci sequences are divisibility sequences (see for example \cite[2.2 Proposition]{HS85}). On divisibility of the terms by subscripts, we refer the reader to \cite{A91,HB74,S10,S93}. In this paper we focus on divisibility properties related to divisors of the discriminant of the characteristic polynomial of generalized Fibonacci sequences. The following theorems and corollaries are our main results.

\begin{theorem}\label{main} Let $p,q\in\Z$, $(G_n)_{n\ge0}$ be the $<p,q>$-Fibonacci sequence and $r=p^2+4q\neq0$.
\begin{itemize}
\item[(1)] For all $s\in\N$ satisfying $s\mid r$ and for all integers $k,n\ge0$, we have
$$s^kG_n\mid G_{s^kn}.$$
\item[(2)] Suppose that
$$p\text{ is odd, }(p,q)=1\text{ and }s\in\N\text{ satisfying }s\mid r$$
or
$$p\text{ is even, }(\frac{p}{2},q)=1\text{ and }s\in\N\text{ satisfying }s\mid\frac{r}{4}$$
or
$$(p,q)=1\text{ and }s\ge3\text{ is a prime satisfying }s\mid r.$$
If $3\nmid q+1$ or $3\nmid s$, then for all integers $k,n\ge0$,
$$s^k\mid n\quad\text{if and only if}\quad s^k\mid G_n.$$
\end{itemize}
\end{theorem}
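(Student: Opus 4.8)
The plan is to reduce the biconditional to a single statement about $\ell$-adic valuations and then establish that statement by a Binet-type binomial expansion. Writing $v_\ell$ for the $\ell$-adic valuation, I claim that under the hypotheses one has $v_\ell(G_n)=v_\ell(n)$ for every prime $\ell$ dividing $s$ and every $n\ge1$. Granting this, for $n\ge1$ we have $s^k\mid G_n$ if and only if $v_\ell(G_n)\ge k\,v_\ell(s)$ for every prime $\ell\mid s$, if and only if (by the claim) $v_\ell(n)\ge k\,v_\ell(s)$ for every such $\ell$, if and only if $s^k\mid n$; the case $n=0$ is trivial since $G_0=0$. This yields the desired equivalence. (The easy implication $s^k\mid n\Rightarrow s^k\mid G_n$ also follows at once from part~(1): writing $n=s^km$ gives $s^k\mid s^kG_m\mid G_{s^km}=G_n$, using $s\mid r$.)

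To prove the valuation identity, let $\alpha,\beta$ be the roots of $x^2-px-q$, set $\gamma=p/2$ and $\delta=(\alpha-\beta)/2$, so that $\alpha=\gamma+\delta$, $\beta=\gamma-\delta$ and $\delta^2=r/4$. Expanding $(\gamma+\delta)^n-(\gamma-\delta)^n$ and cancelling $\alpha-\beta=2\delta$ gives the finite expansion
$$ G_n=\sum_{j\ge0}\binom{n}{2j+1}\gamma^{\,n-1-2j}\left(\tfrac{r}{4}\right)^{j}, $$
an identity valid in $\Z_{(\ell)}$ whenever $\gamma,r/4\in\Z_{(\ell)}$ (all exponents are nonnegative since the terms with $2j+1>n$ vanish). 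I will next check that the hypotheses force $\gamma$ to be an $\ell$-adic unit and $v_\ell(r/4)\ge1$: in the first two alternatives $\ell\mid r$ (resp.\ $\ell\mid r/4$) together with the relevant coprimality rules out both $\ell\mid\gamma$ and $\ell\mid q$, and in the third alternative the same holds for the prime $\ell=s$.

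With these facts the term $j=0$ equals $n\gamma^{n-1}$ and has valuation exactly $v_\ell(n)$, so it suffices to show every term with $j\ge1$ has strictly larger valuation. Using $\binom{n}{2j+1}=\tfrac{n}{2j+1}\binom{n-1}{2j}$ one gets that the $j$th term has valuation at least $v_\ell(n)-v_\ell(2j+1)+j\,v_\ell(r/4)\ge v_\ell(n)+\bigl(j-v_\ell(2j+1)\bigr)$, so it is enough that $j>v_\ell(2j+1)$ for all $j\ge1$. When $\ell=2$ (which can occur only under the second alternative, where one checks that $p/2$ is odd, so $\gamma$ is a $2$-adic unit) this is immediate because $2j+1$ is odd; when $\ell\ge5$ it follows from $\ell^{\,j}>2j+1$.

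The main obstacle is the prime $\ell=3$, where the bound degenerates: for $j=1$ one has $v_3(2j+1)=1=j$, and the $j=1$ term can match the valuation of the $j=0$ term (indeed $p=5,q=2$ gives $G_3=27$, so $v_3(G_3)=3\ne v_3(3)$ and the conclusion genuinely fails there). The resolution is that this case is excluded by hypothesis. If $3\mid s$ then $3\mid r$ (or $3\mid r/4$), and since the coprimality assumption forces $3\nmid p$ (resp.\ $3\nmid p/2$), one obtains $1+q\equiv p^2+q\equiv0\pmod3$, i.e.\ $3\mid q+1$. Thus $3\mid s$ implies $3\mid q+1$, so the standing assumption ``$3\nmid q+1$ or $3\nmid s$'' is in fact equivalent to $3\nmid s$, and the troublesome prime never divides $s$. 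This completes the verification of $v_\ell(G_n)=v_\ell(n)$ for all $\ell\mid s$, and hence the theorem.
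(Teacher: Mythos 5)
Your proposal proves part (2) but not part (1), and the parenthetical appeal to ``part (1)'' cannot stand in for a proof of it. Part (1) asserts $s^kG_n\mid G_{s^kn}$ for \emph{every} $p,q$ with $s\mid r$, with no coprimality hypotheses, and your valuation machinery does not reach it: without $(p,q)=1$ (resp.\ $(\frac{p}{2},q)=1$) the quantity $\gamma=p/2$ need not be an $\ell$-adic unit (take $p=3$, $q=9$, $s=3$, where $v_3(G_2)=1\neq0=v_3(2)$), so the identity $v_\ell(G_n)=v_\ell(n)$ genuinely fails even though part (1) still holds; moreover part (1) concerns divisibility by the product $s^kG_n$, not merely by a power of $s$. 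The paper proves part (1) by a separate argument: setting $A_n=\frac{(p+\sqrt r)^n+(p-\sqrt r)^n}{2}$ and $B_n=2^{n-1}G_n$, it expands $B_{sn}$ in powers of $B_n\sqrt r$ to get $s\mid B_{sn}/B_n$ for odd $s$, treats $s=2$ via $B_{2n}=2A_nB_n$ together with $2^n\mid A_n$ (which uses $A_n^2=4^{n-1}rG_n^2+(-4q)^n$ and the fact that $2\mid s\mid r$ forces $2\mid p$ and hence $4\mid r$), and then iterates. Some argument of this kind must be supplied before the theorem as stated is proved.

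For part (2) your route is correct and genuinely different from the paper's. The paper proceeds in two stages: Theorem 1.2 first establishes the case $k=1$ from the expansion of $B_n$ (resp.\ $G_n$) in powers of $r$ (resp.\ $r/4$), then upgrades to all $k$ by induction under the auxiliary condition ``$s\nmid t\Rightarrow s^2\nmid G_{st}$''; Theorem 1.1(2) is completed by verifying that condition with another expansion, the hypothesis ``$3\nmid q+1$ or $3\nmid s$'' entering exactly as in your analysis (under the coprimality assumptions it is equivalent to $3\nmid s$). You instead prove the exact valuation identity $v_\ell(G_n)=v_\ell(n)$ for each prime $\ell\mid s$ in a single pass, isolating the $j=0$ term of $G_n=\sum_{j\ge0}\binom{n}{2j+1}(\frac p2)^{n-1-2j}(\frac r4)^j$ and bounding the rest via $j>v_\ell(2j+1)$; I checked the unit and valuation claims for $\gamma$ and $r/4$ in all three alternatives (including $\ell=2$ in the second, where $p/2$ is forced to be odd), and the only delicate prime, $\ell=3$, is correctly excluded by your observation that $3\mid s$ already implies $3\mid q+1$. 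This is cleaner than the paper's two-step reduction and yields the sharper conclusion $v_\ell(G_n)=v_\ell(n)$, which the paper never makes explicit; it would be worth stating as a lemma once part (1) is supplied.
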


\begin{theorem}\label{main2}
Let $p,q\in\Z$, $(G_n)_{n\ge0}$ be the $<p,q>$-Fibonacci sequence, $r=p^2+4q\neq0$ and $s\in\N$. Suppose that
$$p\text{ is odd, }(p,q)=1\text{ and }s\mid r$$
or
$$p\text{ is even, }(\frac{p}{2},q)=1\text{ and }s\mid \frac{r}{4}$$
or
$$(p,q)=1\text{ and }s\text{ is a prime satisfying }s\mid r.$$
\begin{itemize}
\item[(1)] For all integers $n\ge0$,
$$s\mid n\quad\text{if and only if}\quad s\mid G_n.$$
\item[(2)] If for all $t\in\N$,
$$s\nmid t\quad\text{implies}\quad s^2\nmid G_{st},$$
then for all integers $k,n\ge0$,
$$s^k\mid n\quad\text{if and only if}\quad s^k\mid G_n.$$
\end{itemize}
\end{theorem}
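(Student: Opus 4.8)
The two forward implications require no new work. In all three cases one has $s\mid r$ (in the even case $s\mid r/4$, hence $s\mid r$), so Theorem~\ref{main}(1) gives $s^kG_m\mid G_{s^km}$ for every $k,m\ge0$; writing $n=s^km$ whenever $s^k\mid n$ then yields $s^k\mid G_n$. Thus the entire content of both parts lies in the converse implications $s^k\mid G_n\Rightarrow s^k\mid n$. Throughout I would work with the companion sequence $V_n=\alpha^n+\beta^n$, where $\alpha,\beta=(p\pm\sqrt r)/2$ are the roots of $x^2-px-q$, Binet's formula $G_n=(\alpha^n-\beta^n)/(\alpha-\beta)$, and the identity $V_n^2-rG_n^2=4(-q)^n$.

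For part~(1) the plan is to expand Binet's formula. Since $(\alpha-\beta)^2=r$, the binomial theorem gives $2^{n-1}G_n=\sum_{l\ge0}\binom{n}{2l+1}p^{n-1-2l}r^l$, so reducing modulo $s$ and using $s\mid r$ kills every term but $l=0$, leaving $2^{n-1}G_n\equiv np^{n-1}\pmod s$. In the odd-$p$ case $s$ is odd, and $(p,q)=1$ with $s\mid r$ force $(p,s)=1$ (a common prime factor of $p$ and $s$ would divide $r-p^2=4q$, hence $q$); thus $2$ and $p$ are units modulo $s$ and $s\mid G_n\iff s\mid n$. The even-$p$ case is identical after rewriting in the coordinates $\alpha,\beta=p/2\pm\sqrt{r/4}$, which give $G_n\equiv n(p/2)^{n-1}\pmod s$ with $(p/2,s)=1$; the odd-prime instance of the third hypothesis is the same, while $s=2$ (which forces $p$ even and $q$ odd) is handled directly by $G_n\equiv n\pmod2$.

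For part~(2) I would prove the converse by inducting on $j\ge0$ on the statement $P(j)$: if $s\nmid m$ then $s^{j+1}\nmid G_{s^jm}$. Granting every $P(j)$, writing $n=s^jm$ with $s\nmid m$ shows $s^{j+1}\nmid G_n$, so $s^k\mid G_n$ forces $k\le j$ and hence $s^k\mid s^j\mid n$. Here $P(0)$ is exactly part~(1) and $P(1)$ is the standing hypothesis. The inductive engine is a multiplication formula: with $M=s^{j-1}m$ and $\delta=\alpha^M-\beta^M$ one has $\delta^2=rG_M^2$, whence
$$2^{s-1}\,\frac{G_{sM}}{G_M}=\sum_{l\ge0}\binom{s}{2l+1}V_M^{\,s-1-2l}\,r^{\,l}\,G_M^{\,2l},$$
whose $l=0$ term is $sV_M^{\,s-1}$. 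In the even-$p$ case one rewrites this using $W_M:=V_M/2$ and $r/4$ to clear the $2^{s-1}$, obtaining $G_{sM}/G_M=\sum_l\binom{s}{2l+1}W_M^{\,s-1-2l}(r/4)^lG_M^{2l}$ with leading term $sW_M^{\,s-1}$.

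The hard part is the $s$-adic bookkeeping that makes the leading term dominate, done one prime $\ell\mid s$ at a time with $c:=v_\ell(s)$. The linchpin is the lemma that $V_M$ (resp.\ $W_M$) is coprime to $s$ for every $M$: reducing $V_M^2=rG_M^2+4(-q)^M$ (resp.\ $W_M^2=(r/4)G_M^2+(-q)^M$) modulo $\ell$ and using $s\mid r$ leaves $V_M^2\equiv4(-q)^M$ (resp.\ $W_M^2\equiv(-q)^M$), nonzero mod $\ell$ because the gcd hypotheses force $\ell\nmid q$ and, when $\ell=2$, force $q$ odd. Hence the leading term has $\ell$-valuation exactly $c$, while for $l\ge1$ the $l$-th term has $\ell$-valuation at least $l\,v_\ell(r)+2l\,v_\ell(G_M)\ge1+2(j-1)c$, using $s\mid r$ and $s^{j-1}\mid G_M$ (the latter from Theorem~\ref{main}(1)); this strictly exceeds $c$ exactly when $j\ge2$, which is precisely why $P(1)$ must be assumed rather than proved. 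It follows that $v_\ell(G_{sM})=v_\ell(G_M)+c$ for all $\ell\mid s$. Finally $P(j-1)$, applied to $M=s^{j-1}m$, supplies a prime $\ell_0\mid s$ with $v_{\ell_0}(G_M)<j\,v_{\ell_0}(s)$, and then $v_{\ell_0}(G_{sM})=v_{\ell_0}(G_M)+v_{\ell_0}(s)<(j+1)v_{\ell_0}(s)$, i.e.\ $s^{j+1}\nmid G_{s^jm}$, closing the induction. The genuine difficulty is thus this uniform valuation control for possibly composite $s$: extracting the correct prime $\ell_0$ from $P(j-1)$, and neutralizing the stray $2^{s-1}$ in the even case by passing to the coordinates $W_M$ and $r/4$.
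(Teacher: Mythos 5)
Your proposal follows the same architecture as the paper's proof: the forward implications come from Theorem \ref{main} (1); part (1) is proved by expanding Binet's formula binomially and reducing modulo $s$ to isolate the term $np^{n-1}$ (resp.\ $n(\frac{p}{2})^{n-1}$), using the coprimality $(s,p)=1$ (resp.\ $(s,\frac{p}{2})=1$) extracted from the gcd hypotheses; and part (2) is an induction on the exponent $j$ in $n=s^jm$, driven by the multiplication formula for $G_{sM}/G_M$, with base cases supplied by part (1) and the standing hypothesis. Two sub-steps are done differently, both legitimately. First, where the paper proves the key coprimality $(s,A_{s^kt})=1$ by reading off $A_{s^kt}\equiv p^{s^kt}$ modulo $r$ from the binomial expansion, you get $(s,V_M)=1$ from the norm identity $V_M^2-rG_M^2=4(-q)^M$; this is slicker, and mirrors the identity the paper itself uses inside the proof of Theorem \ref{main} (1). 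Second, the paper's inductive step is a direct divisibility manipulation ($s^{k+2}\mid sA^{s-1}B$ forces $s^{k+1}\mid B$ by coprimality), whereas you compute the exact valuation $v_\ell(G_{sM})=v_\ell(G_M)+v_\ell(s)$ prime by prime and then extract a witness prime from $P(j-1)$; this is heavier bookkeeping but proves a slightly stronger statement and makes the role of the hypothesis $P(1)$ transparent.

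One point needs repair. Under the third hypothesis with $s=2$ and $2\nmid\frac{r}{4}$ (which forces $4\mid p$ and $q$ odd; e.g.\ $p=4$, $q=1$), your linchpin lemma ``$W_M$ is coprime to $s$ for every $M$'' is false --- already $W_1=\frac{p}{2}$ is even --- and its stated proof breaks, because it reduces $\frac{r}{4}G_M^2$ to $0$ modulo $\ell=2$, which requires $2\mid\frac{r}{4}$. The argument survives: in the inductive step $M=s^{j-1}m$ with $j\ge2$ is even, so $G_M$ is even and the identity $W_M^2=\frac{r}{4}G_M^2+(-q)^M$ still yields $W_M$ odd; alternatively, this subcase is vacuous for part (2), since $4\mid p=G_2$ already violates the hypothesis with $t=1$. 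Either way the claim must be restricted to the $M$ actually occurring in the induction. The paper isolates exactly this subcase (Case 3 of its proof) and treats it separately; your write-up should do the same.
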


It is worth to note that the $<p,q>$-Fibonacci sequence $(G_n)_{n\ge0}$ studied in this paper is exactly the Lucasian sequence $(U)=(U_n)_{n\ge0}$ in \cite{W38} with the generator (characteristic polynomial) $f(x)=x^2-px-q$. If $f(x)$ is irreducible modulo a given prime, some laws of apparition of the prime in $(U_n)_{n\ge0}$ are obtained in \cite[Theorem 5.1 and 12.1]{W38}. Our results do not require $f(x)$ to be irreducible modulo a prime, but we only consider apparition of factors of the discriminant of $f(x)$. For example, let $(G_n)_{n\ge0}$ be the $<3,4>$-Fibonacci sequence. Although the generator $f(x)=x^2-3x-4$ is not irreducible modulo $5$, since $5$ is a prime factor of the discriminant of $f(x)$, by applying Theorem \ref{main2} (1), we can get the conclusion that $5$ is the unique rank of apparition (see \cite{W38} for definition) of $5$ in $(G_n)_{n\ge0}$. Besides, \cite[Theorem 9.1]{W38} shows that $s$ is a rank of apparition of any prime $s$ in $(U_n)_{n\ge0}$ which divides the discriminant of the generator $f(x)$. For the case that $(U_n)_{n\ge0}$ is the $<p,q>$-Fibonacci sequence $(G_n)_{n\ge0}$, it is straightforward to see that our Theorem \ref{main2} (1) (with the conditions $(p,q)=1$ and $s$ is a prime satisfying $s\mid r$) recover \cite[Theorem 9.1]{W38}, noting that $(p,q)\neq1$ will imply that there exists integer $m\ge2$ which divides every term of $(G_n)_{n\ge0}$ beyond a certain point (in fact, $(p,q)\mid G_n$ for all $n\ge2$), and this is an exception stated in the postil $\S$ at the bottom of the first page in \cite{W38}.

In the following, we give some corollaries according to Theorem \ref{main} and \ref{main2}.

\begin{corollary}\label{square}
Let $p,q\in\Z$, $(G_n)_{n\ge0}$ be the $<p,q>$-Fibonacci sequence, $r=p^2+4q\neq0$ and $s\in\N$. If
$$p\text{ is odd, }(p,q)=1\text{ and }s^2\mid r$$
or
$$p\text{ is even, }(\frac{p}{2},q)=1\text{ and }s^2\mid\frac{r}{4},$$
then for all integers $k,n\ge0$,
$$s^k\mid n\quad\text{if and only if}\quad s^k\mid G_n.$$
\end{corollary}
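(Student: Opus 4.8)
The plan is to derive the corollary from Theorem~\ref{main2}(2). In either case the hypothesis gives $s^2\mid r$, hence a fortiori $s\mid r$ (first case) or $s\mid r/4$ (second case), so the standing hypotheses of Theorem~\ref{main2} hold. Moreover $(s,q)=1$: if a prime $\ell$ divided both $s$ and $q$, then from $s\mid p^2+4q$ (resp.\ $s\mid (p/2)^2+q$) it would divide $p^2$ (resp.\ $(p/2)^2$), contradicting $(p,q)=1$ (resp.\ $(p/2,q)=1$). It thus remains to verify the extra hypothesis of Theorem~\ref{main2}(2): for every $t\in\N$, $s\nmid t$ implies $s^2\nmid G_{st}$. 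I shall prove the sharper statement that $v_\ell(G_{st})=v_\ell(s)+v_\ell(G_t)$ for every prime $\ell\mid s$, where $v_\ell$ denotes the $\ell$-adic valuation; this is a lifting-the-exponent identity whose easy half, $v_\ell(G_{st})\ge v_\ell(s)+v_\ell(G_t)$, is already contained in Theorem~\ref{main}(1).

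I would work with the root representation $G_n=(\alpha^n-\beta^n)/(\alpha-\beta)$, where $\alpha,\beta$ are the roots of $x^2-px-q$. Since $t\mid st$ and $(G_n)_{n\ge0}$ is a divisibility sequence, $H_s:=G_{st}/G_t$ is an integer, and with $A=\alpha^t$, $B=\beta^t$, $V_t=\alpha^t+\beta^t$ we have $H_s=(A^s-B^s)/(A-B)$ and the companion identity $V_t^2-rG_t^2=4(-q)^t$. Expanding $A^s-B^s$ in terms of $A+B=V_t$ and $(A-B)^2=rG_t^2$ yields $2^{s-1}H_s=\sum_{j\ge0}\binom{s}{2j+1}V_t^{s-1-2j}(rG_t^2)^j$, whose term $j=0$ equals $sV_t^{s-1}$. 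Since $v_\ell(r)\ge v_\ell(s^2)=2v_\ell(s)$ for each prime $\ell\mid s$, every term with $j\ge1$ is divisible by $r$ and so has $\ell$-adic valuation at least $2v_\ell(s)$. The whole problem therefore reduces to showing that the term $j=0$ strictly dominates, that is, $v_\ell(H_s)=v_\ell(s)$.

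For an odd prime $\ell\mid s$ this is quick: $2^{s-1}$ is an $\ell$-adic unit, and reducing the companion identity modulo $\ell^2$ gives $V_t^2\equiv 4(-q)^t\pmod{\ell^2}$, which together with $(s,q)=1$ and $\ell$ odd forces $v_\ell(V_t)=0$; hence the term $j=0$ has valuation exactly $v_\ell(s)$ and beats the higher terms. In particular this settles every case in which $s$ is odd --- automatic when $p$ is odd, since then $r$ is odd --- because $s^2\mid G_{st}$ would, after cancelling the unit $2^{s-1}$ modulo $s^2$ in $2^{s-1}G_{st}\equiv sG_tV_t^{s-1}\pmod{s^2}$, give $s\mid G_tV_t^{s-1}$, whence $s\mid G_t$ and $s\mid t$ by Theorem~\ref{main2}(1), a contradiction.

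The delicate case, and the one I expect to be the main obstacle, is $\ell=2$; it arises only when $p$ is even and $2\mid s$, where the factor $2^{s-1}$ is no longer invertible modulo $s^2$. Here $(s,q)=1$ forces $q$ odd and $16\mid r$, so the companion identity gives $v_2(V_t)=1$. Writing $V_t=2W$ with $W$ odd and $E^2=\frac{r}{4}G_t^2$ (so $v_2(E^2)\ge2$) removes the awkward powers of two and gives the cleaner expansion $H_s=\sum_{j\ge0}\binom{s}{2j+1}W^{s-1-2j}(E^2)^j$. Using $v_2\binom{s}{2j+1}=v_2(s)+v_2\binom{s-1}{2j}\ge v_2(s)$, each term with $j\ge1$ has valuation at least $v_2(s)+2$, so the term $j=0$, namely $sW^{s-1}$, dominates and $v_2(H_s)=v_2(s)$. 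With the lifting-the-exponent identity established for all primes $\ell\mid s$, the equivalences $s^2\mid G_{st}\iff s\mid G_t\iff s\mid t$ (the first because $v_\ell(s)\le v_\ell(G_t)$ for every $\ell\mid s$ is exactly $s\mid G_t$, the second by Theorem~\ref{main2}(1)) show that $s\nmid t$ forces $s^2\nmid G_{st}$. Theorem~\ref{main2}(2) then completes the proof.
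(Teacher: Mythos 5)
Your proof is correct, but it takes a genuinely different and much longer route than the paper's. The paper's entire argument is two lines: since $s^2\mid r$ (resp.\ $s^2\mid\frac{r}{4}$), the integer $s^2$ itself satisfies the hypotheses of Theorem~\ref{main2}, so Theorem~\ref{main2}\,(1) applied to $s^2$ gives $s^2\mid n\iff s^2\mid G_n$ for free; then the extra hypothesis of Theorem~\ref{main2}\,(2) is immediate because $s\nmid t$ implies $s^2\nmid st$, hence $s^2\nmid G_{st}$. You instead verify that hypothesis by proving the sharper lifting-the-exponent identity $v_\ell(G_{st})=v_\ell(s)+v_\ell(G_t)$ for every prime $\ell\mid s$, via the binomial expansion $2^{s-1}G_{st}/G_t=\sum_{j\ge0}\binom{s}{2j+1}V_t^{s-1-2j}(rG_t^2)^j$, the companion identity $V_t^2-rG_t^2=4(-q)^t$ to control $v_\ell(V_t)$, and a separate treatment of $\ell=2$. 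I checked the details (in particular $(s,q)=1$, the unit status of $V_t$ at odd $\ell$, $v_2(V_t)=1$ and $v_2\binom{s}{2j+1}\ge v_2(s)$ in the even case) and they hold; your argument essentially re-derives, in strengthened form, the machinery the paper deploys for Theorem~\ref{main}\,(2). What your approach buys is an exact valuation formula and hence more information than the corollary asks for; what it costs is considerable length where the observation that $s^2$ inherits the hypotheses of Theorem~\ref{main2} would have sufficed.
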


Noting that the classical Fibonacci, Pell and Jacobsthal sequences are exactly the $<1,1>$, $<2,1>$ and $<1,2>$-Fibonacci sequences respectively, Theorem \ref{main} and Corollary \ref{square} imply the following.

\begin{corollary}[Divisibility in Fibonacci, Pell and Jacobsthal sequences]\label{classical}\indent
\newline\emph{(1)} Let $(F_n)_{n\ge0}$ be the \textit{Fibonacci sequence} defined by
$$F_0=0,\quad F_1=1\quad\text{and}\quad F_n=F_{n-1}+F_{n-2}\quad\text{for all }n\ge2.$$
\begin{itemize}
\item[\emph{\textcircled{\footnotesize{1}}}] For all integers $k,n\ge0$, we have
$$5^kF_n\mid F_{5^kn}.$$
\item[\emph{\textcircled{\footnotesize{2}}}] For all integers $k,n\ge0$,
$$5^k\mid n\quad\text{if and only if}\quad 5^k\mid F_n.$$
\end{itemize}
\emph{(2)} Let $(P_n)_{n\ge0}$ be the \textit{Pell sequence} defined by
$$P_0=0,\quad P_1=1\quad\text{and}\quad P_n=2P_{n-1}+P_{n-2}\quad\text{for all }n\ge2.$$
\begin{itemize}
\item[\emph{\textcircled{\footnotesize{1}}}] For all integers $k,n\ge0$, we have
$$2^kP_n\mid P_{2^kn}.$$
\item[\emph{\textcircled{\footnotesize{2}}}] For all integers $k,n\ge0$,
$$2^k\mid n\quad\text{if and only if}\quad 2^k\mid P_n.$$
\end{itemize}
\emph{(3)} Let $(J_n)_{n\ge0}$ be the \textit{Jacobsthal sequence} defined by
$$J_0=0,\quad J_1=1\quad\text{and}\quad J_n=J_{n-1}+2J_{n-2}\quad\text{for all }n\ge2.$$
\begin{itemize}
\item[\emph{\textcircled{\footnotesize{1}}}] For all integers $k,n\ge0$, we have
$$3^kJ_n\mid J_{3^kn}.$$
\item[\emph{\textcircled{\footnotesize{2}}}] For all integers $k,n\ge0$,
$$3^k\mid n\quad\text{if and only if}\quad 3^k\mid J_n.$$
\end{itemize}
\end{corollary}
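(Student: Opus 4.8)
The plan is to recognize each of the three named sequences as a specific $<p,q>$-Fibonacci sequence, compute its discriminant, and then read off every assertion directly from Theorem \ref{main} or Corollary \ref{square} after checking the relevant hypotheses. Concretely, the Fibonacci sequence is the $<1,1>$-sequence with $r=p^2+4q=5$, the Pell sequence is the $<2,1>$-sequence with $r=8$ and $\frac{r}{4}=2$, and the Jacobsthal sequence is the $<1,2>$-sequence with $r=9$. All three discriminants are nonzero, so the theorems apply throughout.

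First I would dispatch the three claims labelled \textcircled{\footnotesize{1}}, each of which has the shape $s^kG_n\mid G_{s^kn}$ with $s=5,2,3$ respectively. Since $5\mid r=5$, $2\mid r=8$ and $3\mid r=9$, in each case $s\mid r$, so all three are immediate instances of Theorem \ref{main} (1). Next I would treat the claims \textcircled{\footnotesize{2}} for Fibonacci and Pell via Theorem \ref{main} (2). For Fibonacci, $p=1$ is odd, $(p,q)=(1,1)=1$ and $s=5\mid r=5$; moreover $q+1=2$ so $3\nmid q+1$, and the extra hypothesis holds, giving $5^k\mid n\iff 5^k\mid F_n$. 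For Pell, $p=2$ is even, $(\frac{p}{2},q)=(1,1)=1$ and $s=2\mid\frac{r}{4}=2$; again $q+1=2$ gives $3\nmid q+1$, so Theorem \ref{main} (2) yields $2^k\mid n\iff 2^k\mid P_n$.

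The one point that requires care, and the reason Corollary \ref{square} is included in the hypotheses, is the Jacobsthal claim \textcircled{\footnotesize{2}}. Here $s=3\mid r=9$ and $p=1$ is odd with $(p,q)=(1,2)=1$, so one is tempted to invoke Theorem \ref{main} (2); but the extra hypothesis of that theorem fails on \emph{both} of its alternatives, since $q+1=3$ gives $3\mid q+1$ and $s=3$ gives $3\mid s$. This is exactly the case the theorem excludes. Corollary \ref{square} circumvents the obstruction: because $p=1$ is odd, $(p,q)=1$ and $s^2=9\mid r=9$, the corollary applies directly and delivers $3^k\mid n\iff 3^k\mid J_n$. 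Thus the main (and essentially only) obstacle is noticing that the Jacobsthal case lands precisely in the gap left open by Theorem \ref{main} (2) and must instead be covered by the stronger divisibility condition $s^2\mid r$ of Corollary \ref{square}; once this is observed, the whole corollary reduces to bookkeeping of hypotheses.
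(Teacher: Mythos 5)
Your proposal is correct and follows exactly the paper's route: the three \textcircled{\footnotesize{1}} claims and the Fibonacci and Pell \textcircled{\footnotesize{2}} claims are read off from Theorem \ref{main}, while the Jacobsthal \textcircled{\footnotesize{2}} claim is obtained from Corollary \ref{square} because the hypothesis ``$3\nmid q+1$ or $3\nmid s$'' of Theorem \ref{main} (2) fails for $q=2$, $s=3$. You have in fact spelled out the hypothesis-checking more explicitly than the paper, which dispatches the whole corollary in one sentence.
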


The next corollary focuses on $<p,1>$-Fibonacci sequences, which have received a lot of attention in recent years (see for examples \cite{F16,FP09,FP07,T16}).

\begin{corollary}\label{q=1} Let $p\in\Z$, $(G_n)_{n\ge0}$ be the $<p,1>$-Fibonacci sequence, $r=p^2+4$ and $s\in\N$. If
$$p\text{ is odd and }s\mid r$$
or
$$p\text{ is even and }s\mid\frac{r}{4}$$
or
$$s\ge3\text{ is a prime satisfying }s\mid r,$$
then for all integers $k,n\ge0$,
$$s^k\mid n\quad\text{if and only if}\quad s^k\mid G_n.$$
\end{corollary}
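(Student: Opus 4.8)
The plan is to derive this corollary as a direct specialization of Theorem \ref{main} (2) to the case $q=1$. The central observation is that several hypotheses appearing in Theorem \ref{main} (2) become automatic once $q=1$, so that the three alternative assumptions listed in this corollary correspond exactly, case by case, to the three alternative assumptions of the theorem.

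First I would dispose of the coprimality conditions. Since $q=1$, we have $(p,q)=(p,1)=1$ for every $p\in\Z$, so the condition $(p,q)=1$ required in the first and third cases of Theorem \ref{main} (2) holds with no extra assumption. Likewise, when $p$ is even, $(\frac{p}{2},q)=(\frac{p}{2},1)=1$ holds automatically, covering the coprimality requirement in the second case. Because $r=p^2+4q=p^2+4$ in this setting, the divisibility hypotheses $s\mid r$ (for $p$ odd, or for $s\ge3$ prime) and $s\mid\frac{r}{4}$ (for $p$ even) are precisely the ones imposed in the corresponding cases here. Thus each of the three cases of the corollary implies the matching alternative hypothesis of Theorem \ref{main} (2).

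It then remains to check the additional technical hypothesis of Theorem \ref{main} (2), namely that $3\nmid q+1$ or $3\nmid s$. With $q=1$ we have $q+1=2$, and since $3\nmid 2$, the condition $3\nmid q+1$ holds unconditionally, so this hypothesis is satisfied irrespective of $s$. Having verified all hypotheses of Theorem \ref{main} (2) in each of the three cases, the conclusion that $s^k\mid n$ if and only if $s^k\mid G_n$ for all integers $k,n\ge0$ follows at once. There is no real obstacle in this argument: the entire content is the bookkeeping verification that, under $q=1$, the general conditions of Theorem \ref{main} (2) collapse to exactly the three stated hypotheses, and I would simply organize the proof around that reduction.
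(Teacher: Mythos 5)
Your proposal is correct and matches the paper's proof exactly: the paper likewise obtains this corollary by setting $q=1$ in Theorem \ref{main} (2), with the coprimality conditions and $3\nmid q+1$ (since $q+1=2$) holding automatically. Your write-up simply makes explicit the bookkeeping that the paper leaves to the reader.
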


Besides, we have the following.

\begin{corollary}\label{p=1 2} Let $q\in\Z$ and $s\in\N$. Suppose that
$$(G_n)_{n\ge0}\text{ is the $<1,q>$-Fibonacci sequence and }s\mid4q+1$$
or
$$(G_n)_{n\ge0}\text{ is the $<2,q>$-Fibonacci sequence and }s\mid q+1.$$
\begin{itemize}
\item[(1)] For all integers $n\ge0$,
$$s\mid n\quad\text{if and only if}\quad s\mid G_n.$$
\item[(2)] If $3\nmid q+1$ or $3\nmid s$, then for all integers $k,n\ge0$,
$$s^k\mid n\quad\text{if and only if}\quad s^k\mid G_n.$$
\end{itemize}
\end{corollary}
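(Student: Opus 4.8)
The plan is to read off the corollary as the specialization of Theorems \ref{main} and \ref{main2} to the parameter values $p=1$ and $p=2$, so the main work is to match the divisibility hypotheses against the discriminant and to dispose of the degenerate case $r=0$.

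First I would translate the hypotheses into the language of the two theorems. For the $<1,q>$-Fibonacci sequence, $p=1$ is odd and $(p,q)=(1,q)=1$ automatically, while the discriminant is $r=p^2+4q=4q+1$; hence the hypothesis $s\mid 4q+1$ is exactly $s\mid r$, and the first bullet in the hypotheses of both theorems is satisfied. For the $<2,q>$-Fibonacci sequence, $p=2$ is even and $(\frac{p}{2},q)=(1,q)=1$ automatically, while $r=4+4q=4(q+1)$, so that $\frac{r}{4}=q+1$ and the hypothesis $s\mid q+1$ is exactly $s\mid\frac{r}{4}$; thus the second bullet is satisfied.

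Before applying the theorems I would verify the standing assumption $r\neq0$. In the $<1,q>$ case $r=4q+1$ is odd and therefore never zero, so nothing is needed. In the $<2,q>$ case $r=4(q+1)$ vanishes precisely when $q=-1$, and since $s\mid q+1=0$ holds for every $s$, the corollary does claim to cover this value; I would treat it by hand. The $<2,-1>$-recurrence $G_n=2G_{n-1}-G_{n-2}$ with $G_0=0$, $G_1=1$ yields $G_n=n$ for all $n\ge0$ by an immediate induction, so both equivalences $s\mid n\Leftrightarrow s\mid G_n$ and $s^k\mid n\Leftrightarrow s^k\mid G_n$ are trivially true. For every other $q$ we have $r\neq0$.

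With $r\neq0$ in hand, part (1) follows at once from Theorem \ref{main2}(1), whose conclusion $s\mid n\Leftrightarrow s\mid G_n$ holds without any further restriction; and part (2) follows at once from Theorem \ref{main}(2) under the assumed condition that $3\nmid q+1$ or $3\nmid s$. I expect no genuine difficulty here: the only point requiring attention is the bookkeeping at the boundary $q=-1$ of the $<2,q>$ family, where the discriminant degenerates to zero and the two theorems cannot be invoked directly, so the conclusion must instead be obtained from the explicit formula $G_n=n$.
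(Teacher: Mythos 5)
Your proposal is correct and follows essentially the same route as the paper: specialize Theorem \ref{main2} (1) and Theorem \ref{main} (2) to $p=1$ (where $r=4q+1$) and $p=2$ (where $\frac{r}{4}=q+1$), and treat the degenerate case $q=-1$ of the $<2,q>$ family separately via the explicit formula $G_n=n$, exactly as the paper does. No gaps.
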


We give the last corollary as follows.

\begin{corollary}\label{r} Let $p\in\Z$, $q\in\N$, $(G_n)_{n\ge0}$ be the $<p,q>$-Fibonacci sequence and $r=p^2+4q$.
\begin{itemize}
\item[(1)] If $r$ is a prime, then for all integers $k,n\ge0$,
$$r^k\mid n\quad\text{if and only if}\quad r^k\mid G_n.$$
\item[(2)] If $\frac{r}{4}$ is a prime and $p\neq0$, then for all integers $k,n\ge0$,
$$(\frac{r}{4})^k\mid n\quad\text{if and only if}\quad(\frac{r}{4})^k\mid G_n.$$
\end{itemize}
\end{corollary}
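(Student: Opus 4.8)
The plan is to derive Corollary \ref{r} directly from Theorem \ref{main2} by verifying that its hypotheses are met under the assumption $q\in\N$. I would first observe that since $q\ge1$, the discriminant $r=p^2+4q$ is automatically positive, so statements about $r$ and $r/4$ being prime make sense as conditions on genuine positive integers, and $r\neq0$ holds trivially.

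For part (1), suppose $r$ is prime. The key point is to check that $(p,q)=1$. If some prime $\ell$ divided both $p$ and $q$, then $\ell^2\mid p^2$ and $\ell\mid 4q$, hence $\ell\mid r=p^2+4q$; since $r$ is prime this forces $\ell=r$ and moreover $r\mid p$, but then $r^2\mid p^2\le p^2+4q=r$ (using $q\ge1$ to keep $r$ genuinely of this size), which is impossible for $r\ge2$. Thus $(p,q)=1$. Now I take $s=r$ in the third alternative hypothesis of Theorem \ref{main2}: we have $(p,q)=1$ and $s=r$ is a prime with $s\mid r$. This immediately yields part (1) of that theorem, namely $r\mid n\iff r\mid G_n$. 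To upgrade to arbitrary powers via part (2) of Theorem \ref{main2}, I must verify the hypothesis that $s\nmid t$ implies $s^2\nmid G_{st}$ for all $t\in\N$; I would aim to establish this from the structural divisibility results the paper develops (in particular the exact power of $s$ dividing $G_{st}$), after which part (2) delivers $r^k\mid n\iff r^k\mid G_n$.

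For part (2), suppose $r/4$ is prime and $p\neq0$. Here $r=p^2+4q$ must be divisible by $4$, which forces $p$ even, say $p=2p'$ with $p'\neq0$; then $r/4=p'^2+q$. I would verify the condition $(\frac{p}{2},q)=(p',q)=1$ by the same gcd argument: a common prime divisor of $p'$ and $q$ would divide $r/4=p'^2+q$, forcing it to equal the prime $r/4$ and then dividing $p'$, which (using $q\ge1$) is incompatible with $r/4=p'^2+q$. With $(p',q)=1$ and $s=r/4$ prime satisfying $s\mid r/4$, the second alternative hypothesis of Theorem \ref{main2} applies, giving part (1) of that theorem for $s=r/4$, and then the same verification of the power-lifting hypothesis feeds into part (2) to conclude $(\frac{r}{4})^k\mid n\iff(\frac{r}{4})^k\mid G_n$.

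The main obstacle I anticipate is discharging the hypothesis in Theorem \ref{main2} (2), namely that $s\nmid t$ implies $s^2\nmid G_{st}$. Establishing the gcd conditions and selecting the correct alternative hypothesis is routine; the genuine content lies in controlling the exact $s$-adic valuation of $G_{st}$, showing it does not jump by an extra factor of $s$ unless $s\mid t$. I would expect to handle this through the identity governing $G_{st}$ in terms of $G_s$, $G_t$ and companion-sequence quantities modulo $s^2$, leveraging that $s\mid r$ (or $s\mid r/4$) pins down the relevant terms; this is where the primality of $r$ (respectively $r/4$) does the essential work.
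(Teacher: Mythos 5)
Your reduction to Theorem \ref{main2} and the gcd verifications ($(p,q)=1$ in part (1), $(\frac{p}{2},q)=1$ in part (2)) are fine and essentially match the paper's. The genuine gap is that you never discharge the hypothesis of Theorem \ref{main2} (2) --- that $s\nmid t$ implies $s^2\nmid G_{st}$ --- you only announce that you ``would aim to establish'' it. This is not a routine valuation computation that primality alone settles: the paper's example with $p=5$, $q=2$, $s=3$ (where $r=33$, $s$ is an odd prime dividing $r$ and $(p,q)=1$) has $G_3=27$, so $s^2\mid G_{st}$ with $t=1$ and $s\nmid t$; the hypothesis genuinely fails there. So whatever argument you supply must use more than ``$s$ is a prime dividing $r$ (or $r/4$)''.

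The paper's route sidesteps the issue by invoking Theorem \ref{main} (2) rather than Theorem \ref{main2} (2): Theorem \ref{main} (2) already contains the needed power-lifting verification, but only under the extra condition ``$3\nmid q+1$ or $3\nmid s$''. For part (1) this is automatic, since $q\in\N$ forces $r\ge4$, hence the prime $r$ satisfies $r\ge5$ and $3\nmid s=r$. For part (2) it can fail: $s=\frac{r}{4}=3$ is possible, and then $p=\pm2$, $q=2$ gives $3\mid q+1$ and $3\mid s$. The paper isolates exactly this case ($s=3$ forces $(\frac{p}{2})^2=1$ and $q=2$, using $p\neq0$ and $q\in\N$) and checks the hypothesis of Theorem \ref{main2} (2) by hand: the congruence (\ref{s div t}) reduces to $3\mid t+t^3$, which is false whenever $3\nmid t$. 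Your proposal, as written, would need either to import Theorem \ref{main} (2) together with the observation about $3\nmid s$ (and the separate treatment of $s=3$ in part (2)), or to reproduce this explicit case analysis; without one of these, the step from the $k=1$ statement to arbitrary powers is unproven.
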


\begin{remark} More generally, for the sequence $(G^*_n)_{n\ge0}$ defined by
$$G^*_0=0,\quad G^*_1=\alpha\quad\text{and}\quad G^*_n=pG^*_{n-1}+qG^*_{n-2}\quad\text{for all }n\ge2,$$
where $\alpha,p,q\in\Z$, Theorem \ref{main} (1) still holds, since $(G^*_n)_{n\ge0}=(\alpha G_n)_{n\ge0}$.
\end{remark}

This paper is organized as follows. In Section 2, we give some examples to clarify that the detailed conditions in Theorem \ref{main}, \ref{main2} and Corollary \ref{square}, \ref{q=1}, \ref{p=1 2}, \ref{r} can not be omitted. Then we prove the main results in Section 3, and finally present further questions in Section 4.

\section{Examples}

In this section, we give some examples to clarify that the detailed conditions in Theorem \ref{main}, \ref{main2} and Corollary \ref{square}, \ref{q=1}, \ref{p=1 2}, \ref{r} can not be omitted.

\begin{example} Let $p=q=1$, $(G_n)_{n\ge0}$ be the $<1,1>$-Fibonacci sequence, $r=p^2+4q=5$ and $s=3$ ( $\nmid r$). Then $G_s=2$ and $s\nmid G_s$. It means that the condition $s\mid r$ in Theorem \ref{main}, \ref{main2}, Corollary \ref{q=1} and the condition $s\mid4q+1$ in Corollary \ref{p=1 2} can not be omitted.
\end{example}

\begin{example} Let $p=3$, $q=9$, $(G_n)_{n\ge0}$ be the $<3,9>$-Fibonacci sequence, $r=p^2+4q=45$ and $s=3$. By $G_2=3$, we get $s\mid G_2$ but $s\nmid2$. It means that:
\begin{itemize}
\item[(1)] even if $p$ is odd, $s\ge3$ is a prime $s\mid r$ and $3\nmid q+1$, the condition $(p,q)=1$ in Theorem \ref{main} (2) and Theorem \ref{main2} can not be omitted;
\item[(2)] even if $p$ is odd and $s^2\mid r$, the condition $(p,q)=1$ in Corollary \ref{square} can not be omitted.
\end{itemize}
\end{example}

\begin{example} Let $p=4$, $q=1$, $(G_n)_{n\ge0}$ be the $<4,1>$-Fibonacci sequence and $r=p^2+4q=20$.
\newline(1) Let $s=20$. By simple calculation we get $G_{10}=416020$ and $s\mid G_{10}$ but $s\nmid10$. It means that:
\begin{itemize}
\item[\textcircled{\footnotesize{1}}] even if $(p,q)=1$, $s\mid r$ and $3\nmid q+1$, the condition that $p$ is odd in Theorem \ref{main} (2), Theorem \ref{main2} and Corollary \ref{q=1} can not be omitted;
\item[\textcircled{\footnotesize{2}}] even if $p$ is even, $(\frac{p}{2},q)=1$, $s\mid r$ and $3\nmid q+1$, the condition $s\mid\frac{r}{4}$ in Theorem \ref{main} (2), Theorem \ref{main2} and Corollary \ref{q=1} can not be omitted;
\item[\textcircled{\footnotesize{3}}] even if $(p,q)=1$, $s\ge3$ satisfies $s\mid r$ and $3\nmid q+1$, the condition that $s$ is a prime in Theorem \ref{main} (2), Theorem \ref{main2} and Corollary \ref{q=1} can not be omitted.
\end{itemize}
(2) Let $s=2$. By $G_2=4$ we get $s^2\mid G_2$ but $s^2\nmid2$. It means that:
\begin{itemize}
\item[\textcircled{\footnotesize{1}}] even if $(p,q)=1$, $s$ is a prime satisfying $s\mid r$ and $3\nmid q+1$, the condition $s\ge3$ in Theorem \ref{main} (2) and Corollary \ref{q=1} can not be omitted;
\item[\textcircled{\footnotesize{2}}] even if $(p,q)=1$ and $s^2\mid r$, the condition that $p$ is odd in Corollary \ref{square} can not be omitted.
\end{itemize}
\end{example}

\begin{example} Let $p=q=4$, $(G_n)_{n\ge0}$ be the $<4,4>$-Fibonacci sequence and $r=p^2+4q=32$.
\begin{itemize}
\item[(1)] Let $s=4$. By $G_2=4$ we get $s\mid G_2$ but $s\nmid2$. It means that even if $p$ is even, $s\mid\frac{r}{4}$ and $3\nmid q+1$, the condition $(\frac{p}{2},q)=1$ in Theorem \ref{main} (2) and Theorem \ref{main2} can not be omitted.
\item[(2)] Let $s=2$. By $G_3=20$ we get $s\mid G_3$ but $s\nmid3$. It means that even if $p$ is even and $s^2\mid\frac{r}{4}$, the condition $(\frac{p}{2},q)=1$ in Corollary \ref{square} can not be omitted.
\end{itemize}
\end{example}

\begin{example} Let $p=5$, $q=2$, $(G_n)_{n\ge0}$ be the $<5,2>$-Fibonacci sequence, $r=p^2+4q=33$ and $s=3$. By $G_3=27$ we get $s^2\mid G_3$ but $s^2\nmid3$. It means that even if $p$ is odd, $(p,q)=1$, $s\ge3$ is a prime and $s\mid r$, the condition  $3\nmid q+1$ or $3\nmid s$ in Theorem \ref{main} (2) and the condition $s^2\mid r$ in Corollary \ref{square} can not be omitted.
\end{example}

\begin{example} Let $p=2$, $q=5$, $(G_n)_{n\ge0}$ be the $<2,5>$-Fibonacci sequence, $r=p^2+4q=24$ and $s=3$. By $G_3=9$ we get $s^2\mid G_3$ but $s^2\nmid3$. It means that:
\begin{itemize}
\item[(1)] even if $p$ is even, $(\frac{p}{2},q)=1$ and $s\mid\frac{r}{4}$, the condition $3\nmid q+1$ or $3\nmid s$ in Theorem \ref{main} (2) and the condition $s^2\mid\frac{r}{4}$ in Corollary \ref{square} can not be omitted;
\item[(2)] even if $s\mid q+1$, the condition $3\nmid q+1$ or $3\nmid s$ in Corollary \ref{p=1 2} (2) can not be omitted.
\end{itemize}
\end{example}

\begin{example} Let $p=q=2$, $(G_n)_{n\ge0}$ be the $<2,2>$-Fibonacci sequence and $r=p^2+4q=12$.
\begin{itemize}
\item[(1)] By simple calculation we get $G_6=120$ and $r\mid G_6$ but $r\nmid6$. It means that the condition that $r$ is a prime in Corollary \ref{r} (1) can not be omitted.
\item[(2)] Let $s=2$. By $G_3=6$ we get $s\mid G_3$ but $s\nmid3$. It means that the condition $s\mid q+1$ in Corollary \ref{p=1 2} can not be omitted.
\end{itemize}
\end{example}

\begin{example} Let $p=4$, $q=2$, $(G_n)_{n\ge0}$ be the $<4,2>$-Fibonacci sequence and $r=p^2+4q=24$. Then $\frac{r}{4}=6$. By $G_3=18$ we get $\frac{r}{4}\mid G_3$ but $\frac{r}{4}\nmid3$. It means that even if $4\mid r$ and $p\neq0$, the condition that $\frac{r}{4}$ is a prime in Corollary \ref{r} (2) can not be omitted.
\end{example}

\begin{example} Let $p=1$, $q=8$, $(G_n)_{n\ge0}$ be the $<1,8>$-Fibonacci sequence and $s=3$. By $G_3=9$ we get $s^2\mid G_3$ but $s^2\nmid3$. It means that even if $s\mid4q+1$, the condition $3\nmid q+1$ or $3\nmid s$ in Corollary \ref{p=1 2} (2) can not be omitted.
\end{example}

\begin{example} Let $p=0$, $q=2$, $(G_n)_{n\ge0}$ be the $<0,2>$-Fibonacci sequence and $r=p^2+4q=8$. Then $\frac{r}{4}=2$. By $G_3=2$ we get $\frac{r}{4}\mid G_3$ but $\frac{r}{4}\nmid3$. It means that even if $\frac{r}{4}$ is a prime, the condition $p\neq0$ in Corollary \ref{r} (2) can not be omitted.
\end{example}

\begin{example} Let $p=5$, $q=-5$, $(G_n)_{n\ge0}$ be the $<5,-5>$-Fibonacci sequence and $r=p^2+4q=5$. By $G_2=5$ we get $r\mid G_2$ but $r\nmid2$. It means that even if $q\in\Z$ and $r$ is a prime, the condition $q\in\N$ in Corollary \ref{r} can not be omitted for the statement (1).
\end{example}

\begin{example} Let $p=4$, $q=-2$, $(G_n)_{n\ge0}$ be the $<4,-2>$-Fibonacci sequence and $r=p^2+4q=8$. Then $\frac{r}{4}=2$. By $G_3=14$ we get $\frac{r}{4}\mid G_3$ but $\frac{r}{4}\nmid3$. It means that even if $q\in\Z$, $\frac{r}{4}$ is a prime and $p\neq0$, the condition $q\in\N$ in Corollary \ref{r} can not be omitted for the statement (2).
\end{example}

\section{Proof of the main results}

The following proposition, which says that generalized Fibonacci sequences are all divisibility sequences, follows from \cite[2.2 Proposition]{HS85} (see also \cite[Theorem IV]{C13}).

\begin{proposition}\label{1} Let $p,q\in\Z$ and $(G_n)_{n\ge0}$ be the $<p,q>$-Fibonacci sequence. Then for all integers $k,n\ge0$, we have $G_n\mid G_{kn}$.
\end{proposition}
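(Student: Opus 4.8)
The plan is to package the recurrence into the companion matrix $M=\begin{pmatrix} p & q \\ 1 & 0\end{pmatrix}$ and to read off the divisibility from its powers. First I would prove, by induction on $n\ge1$, the matrix identity
\[
M^n=\begin{pmatrix} G_{n+1} & qG_n \\ G_n & qG_{n-1}\end{pmatrix}.
\]
The base case $n=1$ is immediate from $G_2=p$, $G_1=1$, $G_0=0$, and the inductive step is the single multiplication $M^{n+1}=M\cdot M^{n}$, whose entries collapse to $G_{n+2}$, $qG_{n+1}$, $G_{n+1}$, $qG_n$ precisely because of the defining recurrence $G_{m+1}=pG_m+qG_{m-1}$. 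This step is purely formal and uses nothing about the discriminant $r$, so the identity is valid for every $p,q\in\Z$.

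The key observation is then that both off-diagonal entries of $M^n$, namely $qG_n$ and $G_n$, are multiples of $G_n$. Hence, reducing entrywise modulo $G_n$, the matrix $M^n$ becomes diagonal over $\Z/G_n\Z$. Since diagonal matrices are closed under multiplication over any commutative ring, $(M^n)^k=M^{kn}$ is again diagonal modulo $G_n$ for every $k\ge0$; in particular its $(2,1)$-entry, which by the identity above equals $G_{kn}$, is congruent to $0$ modulo $G_n$. This is exactly the assertion $G_n\mid G_{kn}$.

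I expect no serious obstacle; the only points needing a word are the degenerate cases $n=0$ (where $G_0=0\mid0=G_0$) and $G_n=0$. When $G_n=0$ the ``reduction modulo $G_n$'' is literal equality, and the off-diagonal entries of $M^n$ are then genuinely $0$, so $M^n$ and hence $(M^n)^k$ are honest diagonal integer matrices, forcing $G_{kn}=0$ and thus $0\mid G_{kn}$ as required. The mild subtlety is that $\Z/G_n\Z$ need not be a domain, but the argument only uses that a product of diagonal matrices is diagonal, which is harmless. Equivalently, and avoiding matrices altogether, one could induct directly on $k$ using the addition formula $G_{m+n}=G_mG_{n+1}+qG_{m-1}G_n$ (itself proved by a short induction on $n$ from the recurrence): with $m=n$ the step $G_{(k+1)n}=G_nG_{kn+1}+qG_{n-1}G_{kn}$ exhibits both summands as multiples of $G_n$ once $G_n\mid G_{kn}$ is known inductively, closing the induction.
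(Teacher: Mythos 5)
Your proof is correct, and it is genuinely more self-contained than what the paper does: the paper offers no proof of Proposition \ref{1} at all, simply deferring to \cite[2.2 Proposition]{HS85} (see also \cite[Theorem IV]{C13}). Your companion-matrix identity $M^n=\left(\begin{smallmatrix} G_{n+1} & qG_n \\ G_n & qG_{n-1}\end{smallmatrix}\right)$ is verified correctly by induction, and the observation that $M^n$ is diagonal modulo $G_n$, hence so is $(M^n)^k=M^{kn}$, cleanly yields $G_n\mid G_{kn}$; your handling of the degenerate cases $n=0$ and $G_n=0$ is careful and right (note only that the matrix identity itself should be asserted for $n\ge1$, as you do, since it would otherwise involve $G_{-1}$). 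The alternative you sketch via the addition formula $G_{m+n}=G_mG_{n+1}+qG_{m-1}G_n$ is essentially the classical argument one finds in the cited sources for second-order divisibility sequences, so either of your two routes would serve as a legitimate replacement for the citation. What the matrix formulation buys is that the addition formula and the divisibility both fall out of a single multiplication of matrices, with no separate inner induction needed; what the paper's citation buys is brevity and a pointer to the general characterization of strong divisibility sequences, which is more than is needed here.
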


First we prove Theorem \ref{main} (1), then Theorem \ref{main2}, then Theorem \ref{main} (2), and finally the corollaries.

\begin{proof}[Proof of Theorem \ref{main} (1)]
By the Binet formula (see for examples \cite[Theorem 2]{YT12} and \cite[2.5 Corollary]{SK13}), for all integers $n\ge0$, we have
\begin{eqnarray}\label{Gn}
G_n=\frac{(p+\sqrt{p^2+4q})^n-(p-\sqrt{p^2+4q})^n}{2^n\sqrt{p^2+4q}}=\frac{(p+\sqrt{r})^n-(p-\sqrt{r})^n}{2^n\sqrt{r}},
\end{eqnarray}
where $r$ can be negative and $\sqrt{r}$ is a complex number. For all integers $n\ge0$, let
\begin{eqnarray}\label{AnBn def}
A_n:=\frac{(p+\sqrt{r})^n+(p-\sqrt{r})^n}{2}\quad\text{and}\quad B_n:=\frac{(p+\sqrt{r})^n-(p-\sqrt{r})^n}{2\sqrt{r}}.
\end{eqnarray}
Then $A_n$ and $B_n$ are both integers,
\begin{eqnarray}\label{AnBn}
\left\{\begin{array}{ll}
A_n+B_n\sqrt{r}=(p+\sqrt{r})^n\\
A_n-B_n\sqrt{r}=(p-\sqrt{r})^n
\end{array}\right.
\end{eqnarray}
and
$$G_n=\frac{B_n}{2^{n-1}}.$$
For all integers $n\ge0$ and $s\ge1$, by
$$A_{sn}+B_{sn}\sqrt{r}=(p+\sqrt{r})^{sn}=(A_n+B_n\sqrt{r})^s$$
we get
\begin{eqnarray*}
B_{sn}\sqrt{r}=\left\{\begin{array}{ll}
\binom{s}{1}A_n^{s-1}B_n\sqrt{r}+\binom{s}{3}A_n^{s-3}(B_n\sqrt{r})^3+\cdots+\binom{s}{s}(B_n\sqrt{r})^s&\text{if $s$ is odd,}\\
\binom{s}{1}A_n^{s-1}B_n\sqrt{r}+\binom{s}{3}A_n^{s-3}(B_n\sqrt{r})^3+\cdots+\binom{s}{s-1}A_n(B_n\sqrt{r})^{s-1}&\text{if $s$ is even,}
\end{array}\right.
\end{eqnarray*}
where $\binom{s}{t}:=\frac{s!}{(s-t)!\cdot t!}$ for all $t\in\{0,1,\cdots,s\}$, and then
\begin{eqnarray}\label{Bsn}
B_{sn}=\left\{\begin{array}{ll}
sA_n^{s-1}B_n+\binom{s}{3}A_n^{s-3}B_n^3r+\cdots+\binom{s}{s}B_n^sr^{\frac{s-1}{2}}&\text{if $s$ is odd,}\\
sA_n^{s-1}B_n+\binom{s}{3}A_n^{s-3}B_n^3r+\cdots+\binom{s}{s-1}A_nB_n^{s-1}r^\frac{s-2}{2}&\text{if $s$ is even.}
\end{array}\right.
\end{eqnarray}
In the following we prove that for all $s\in\N$ satisfying $s\mid r$, we have $s^kG_n\mid G_{s^kn}$ for all integers $k,n\ge0$. Obviously we only need to consider $k,n\ge1$ and $s\ge2$. For $G_n=0$, by Proposition \ref{1} we get $G_{s^kn}=0$ and then $s^kG_n\mid G_{s^kn}$ follows immediately. In the following it suffices to consider $G_n\neq0$, which implies $B_n\neq0$.
\newline\textcircled{\footnotesize{1}} Prove $sG_n\mid G_{sn}$ for all $n\in\N$.
\begin{itemize}
\item[i)] Suppose that $s$ is odd.
\newline On the one hand, (\ref{Bsn}) implies that $\frac{B_{sn}}{B_n}$ is an integer and $s\mid\frac{B_{sn}}{B_n}$ (applying $s\mid r$). On the other hand, by Proposition \ref{1} we get $G_n\mid G_{sn}$, which implies $2^{(s-1)n}\mid\frac{B_{sn}}{B_n}$. It follows from $(s,2)=1$ that $s2^{(s-1)n}\mid\frac{B_{sn}}{B_n}$. Thus $sG_n\mid G_{sn}$.
\item[ii)] Suppose that $s$ is even.
\newline\textcircled{\footnotesize{a}} Prove $2G_n\mid G_{2n}$, i.e., $2^{n+1}B_n\mid B_{2n}$ for all $n\in\N$.
\newline Since (\ref{Bsn}) implies $B_{2n}=2A_nB_n$, it suffices to prove $2^n\mid A_n$. In fact, by (\ref{AnBn}) we get $(A_n+B_n\sqrt{r})(A_n-B_n\sqrt{r})=(p+\sqrt{r})^n(p-\sqrt{r})^n$ and then $A_n^2-B_n^2r=(p^2-r)^n$. It follows from $r=p^2+4q$ and $B_n=2^{n-1}G_n$ that
\begin{eqnarray}\label{An}
A_n^2=4^{n-1}rG_n^2+(-4q)^n.
\end{eqnarray}
Since $2\mid s$ and $s\mid r$ implies $2\mid r$, by $r=p^2+4q$, we get $2\mid p$ and then $4\mid r$. It follows from (\ref{An}) that $4^n\mid A_n^2$, which is equivalent to $2^n\mid A_n$.
\newline\textcircled{\footnotesize{b}} Prove $sG_n\mid G_{sn}$ for all $n\in\N$.
\newline Since $s$ is even, there exist $a,t\in\N$ such that $s=2^at$ where $t$ is odd. By \textcircled{\footnotesize{a}} we get
$$2G_{tn}\mid G_{2tn},\quad 2G_{2tn}\mid G_{2^2tn},\quad 2G_{2^2tn}\mid G_{2^3tn},\quad\cdots,\quad 2G_{2^{a-1}tn}\mid G_{2^atn},$$
which imply
$$2^aG_{tn}\mid 2^{a-1}G_{2tn},\quad 2^{a-1}G_{2tn}\mid 2^{a-2}G_{2^2tn},\quad 2^{a-2}G_{2^2tn}\mid 2^{a-3}G_{2^3tn},\quad\cdots,\quad 2G_{2^{a-1}tn}\mid G_{2^atn}.$$
Thus $2^aG_{tn}\mid G_{2^atn}$. Since $t\mid r$ and $t$ is odd, by i) we get $tG_n\mid G_{tn}$ and then $2^atG_n\mid2^aG_{tn}$. Therefore $2^atG_n\mid G_{2^atn}$, i.e., $sG_n\mid G_{sn}$.
\end{itemize}
\textcircled{\footnotesize{2}} Prove $s^kG_n\mid G_{s^kn}$ for all $n,k\in\N$. In fact, by \textcircled{\footnotesize{1}} we get
$$sG_n\mid G_{sn},\quad sG_{sn}\mid G_{s^2n},\quad sG_{s^2n}\mid G_{s^3n},\quad\cdots,\quad sG_{s^{k-1}n}\mid G_{s^kn},$$
which imply
$$s^kG_n\mid s^{k-1}G_{sn},\quad s^{k-1}G_{sn}\mid s^{k-2}G_{s^2n},\quad s^{k-2}G_{s^2n}\mid s^{k-3}G_{s^3n},\quad\cdots,\quad sG_{s^{k-1}n}\mid G_{s^kn}.$$
Therefore $s^kG_n\mid G_{s^kn}$.
\end{proof}

\begin{proof}[Proof of Theorem \ref{main2}]\indent
\newline(\underline{Case 1}) Suppose that $p$ is odd, $(p,q)=1$ and $s\mid r$.
\newline First we prove $(s,p)=1$. It suffices to prove $(r,p)=1$. Let $k=(r,p)$. Then there exist $a,b\in\Z$ such that $r=ak$ and $p=bk$. It follows from $r=p^2+4q$ that $q=\frac{(a-b^2k)k}{4}$. Since $p$ is odd, $k$ must be odd. By $q\in\N$ we get $\frac{a-b^2k}{4}\in\N$. It follows from $(p,q)=1$ that $k=1$.

Besides, since $r=p^2+4q$ is odd, we know that $s$ is also odd. For $s=1$, the conclusions are obviously true. We only need to consider $s\ge3$ in the following.
\begin{itemize}
\item[(1)] Prove that for all integers $n\ge0$, $s\mid n$ if and only if $s\mid G_n$.
\newline$\boxed{\Rightarrow}$ follows directly from Theorem \ref{main} (1).
\newline$\boxed{\Leftarrow}$ It suffices to consider $n\ge1$. Suppose $s\mid G_n$. Then $s\mid B_n$. Since (\ref{AnBn def}) implies
\begin{eqnarray}\label{Bn}
B_n=\left\{\begin{array}{ll}
np^{n-1}+\binom{n}{3}p^{n-3}r+\binom{n}{5}p^{n-5}r^2+\cdots+\binom{n}{n}r^{\frac{n-1}{2}}&\text{if $n$ is odd,}\\
np^{n-1}+\binom{n}{3}p^{n-3}r+\binom{n}{5}p^{n-5}r^2+\cdots+\binom{n}{n-1}pr^\frac{n-2}{2}&\text{if $n$ is even,}
\end{array}\right.
\end{eqnarray}
it follows from $s\mid B_n$ and $s\mid r$ that $s\mid np^{n-1}$. By $(s,p)=1$ we get $s\mid n$.
\item[(2)] Suppose that for all $t\in\N$, $s\nmid t$ implies $s^2\nmid G_{st}$. We prove that for all integers $k,n\ge0$, $s^k\mid n$ if and only if $s^k\mid G_n$.
\newline$\boxed{\Rightarrow}$ follows directly from Theorem \ref{main} (1).
\newline$\boxed{\Leftarrow}$ \textcircled{\footnotesize{1}} First we prove that for all $t\in\N$ such that $s\nmid t$, we have $s^{k+1}\nmid G_{s^kt}$ for all $k\ge0$ by induction.
\begin{itemize}
\item[i)] For $k=0$, $s\nmid G_t$ follows from (1) $\boxed{\Leftarrow}$.
\item[ii)] For $k=1$, $s^2\nmid G_{st}$ follows from the condition that $s\nmid t$ implies $s^2\nmid G_{st}$.
\item[iii)] Assume that for some $k\in\N$, we have already had $s^{k+1}\nmid G_{s^kt}$ for all $t\in\N$ satisfying $s\nmid t$. It suffices to prove $s^{k+2}\nmid G_{s^{k+1}t}$ in the following by contradiction. Assume $s^{k+2}\mid G_{s^{k+1}t}$ for some $t\in\N$ satisfying $s\nmid t$. Then $s^{k+2}\mid B_{s^{k+1}t}$. Since $s$ is odd, by (\ref{Bsn}) we get
\begin{eqnarray}\label{Bsskt}
B_{s^{k+1}t}=B_{s(s^kt)}=sA_{s^kt}^{s-1}B_{s^kt}+\binom{s}{3}A_{s^kt}^{s-3}B_{s^kt}^3r+\cdots+\binom{s}{s}B_{s^kt}^sr^{\frac{s-1}{2}}.
\end{eqnarray}
Noting that Theorem \ref{main} (1) implies $s^k\mid G_{s^kt}$, we get $s^k\mid B_{s^kt}$ and then $s^{k+2}\mid B_{s^kt}^3$. By $s^{k+2}\mid B_{s^{k+1}t}$ and (\ref{Bsskt}) we get $s^{k+2}\mid sA_{s^kt}^{s-1}B_{s^kt}$ and then
\begin{eqnarray}\label{s k+1}
s^{k+1}\mid A_{s^kt}^{s-1}B_{s^kt}.
\end{eqnarray}
Since (\ref{AnBn def}) implies
$$A_{s^kt}=p^{s^kt}+c(p,r,s,k,t)\quad\text{where }r\mid c(p,r,s,k,t),$$
by $s\mid r$ and $(s,p)=1$, we get $(s,A_{s^kt})=1$. It follows from (\ref{s k+1}) that $s^{k+1}\mid B_{s^kt}$. By $(s,2)=1$ and $B_{s^kt}=2^{s^kt-1}G_{s^kt}$, we get $s^{k+1}\mid G_{s^kt}$, which contradicts the inductive hypothesis.
\end{itemize}
\textcircled{\footnotesize{2}} Let $k,n\ge0$ be integers and suppose $s^k\mid G_n$. We need to prove $s^k\mid n$. It suffices to consider $k,n\ge1$. Let $l\ge0$ and $t\ge1$ be integers such that $n=s^lt$ with $s\nmid t$. By \textcircled{\footnotesize{1}} we get $s^{l+1}\nmid G_n$. It follows from $s^k\mid G_n$ that $k\le l$, which implies $s^k\mid s^lt$ ($=n$).
\end{itemize}
(\underline{Case 2}) Suppose that $p$ is even, $(\frac{p}{2},q)=1$ and $s\mid\frac{r}{4}$.
\newline First we prove $(s,\frac{p}{2})=1$. It suffices to prove $(\frac{r}{4},\frac{p}{2})=1$. Let $k=(\frac{r}{4},\frac{p}{2})$. Then there exists $a,b\in\Z$ such that $\frac{r}{4}=ak$ and $\frac{p}{2}=bk$. It follows from $r=p^2+4q$ that $q=(a-b^2k)k$. By $(\frac{p}{2},q)=1$ we get $k=1$.
\begin{itemize}
\item[(1)] Prove that for all integers $n\ge0$, $s\mid n$ if and only if $s\mid G_n$.
\newline$\boxed{\Rightarrow}$ follows directly from Theorem \ref{main} (1).
\newline$\boxed{\Leftarrow}$ It suffices to consider $n\ge1$. Suppose $s\mid G_n$. By (\ref{Gn}) we get
\begin{eqnarray}\label{Gn expan}
G_n=\left\{\begin{array}{ll}
n(\frac{p}{2})^{n-1}+\binom{n}{3}(\frac{p}{2})^{n-3}\cdot\frac{r}{4}+\binom{n}{5}(\frac{p}{2})^{n-5}(\frac{r}{4})^2+\cdots+\binom{n}{n}(\frac{r}{4})^{\frac{n-1}{2}}&\text{if $n$ is odd,}\\
n(\frac{p}{2})^{n-1}+\binom{n}{3}(\frac{p}{2})^{n-3}\cdot\frac{r}{4}+\binom{n}{5}(\frac{p}{2})^{n-5}(\frac{r}{4})^2+\cdots+\binom{n}{n-1}\frac{p}{2}(\frac{r}{4})^\frac{n-2}{2}&\text{if $n$ is even.}
\end{array}\right.
\end{eqnarray}
It follows from $s\mid G_n$ and $s\mid\frac{r}{4}$ that $s\mid n(\frac{p}{2})^{n-1}$. By $(s,\frac{p}{2})=1$ we get $s\mid n$.
\item[(2)] Suppose that for all $t\in\N$, $s\nmid t$ implies $s^2\nmid G_{st}$. We prove that for all $k,n\ge0$, $s^k\mid n$ if and only if $s^k\mid G_n$.
\newline$\boxed{\Rightarrow}$ follows directly from Theorem \ref{main} (1).
\newline$\boxed{\Leftarrow}$ \textcircled{\footnotesize{1}} First we prove that for all $t\in\N$ such that $s\nmid t$, we have $s^{k+1}\nmid G_{s^kt}$ for all $k\ge0$ by induction.
\begin{itemize}
\item[i)] For $k=0$, $s\nmid G_t$ follows from (1) $\boxed{\Leftarrow}$.
\item[ii)] For $k=1$, $s^2\nmid G_{st}$ follows from the condition that $s\nmid t$ implies $s^2\nmid G_{st}$.
\item[iii)] Assume that for some $k\in\N$, we have already had $s^{k+1}\nmid G_{s^kt}$ for all $t\in\N$ satisfying $s\nmid t$. It suffices to prove $s^{k+2}\nmid G_{s^{k+1}t}$ in the following by contradiction. Assume $s^{k+2}\mid G_{s^{k+1}t}$ for some $t\in\N$ satisfying $s\nmid t$. By (\ref{Bsn}) we get
\begin{eqnarray}\label{Bsskt2}
G_{s^{k+1}t}=\frac{B_{s(s^kt)}}{2^{s^{k+1}t-1}}=\left\{\begin{array}{rl}
s(\frac{A_{s^kt}}{2^{s^kt}})^{s-1}\cdot\frac{B_{s^kt}}{2^{s^kt-1}}+\binom{s}{3}(\frac{A_{s^kt}}{2^{s^kt}})^{s-3}(\frac{B_{s^kt}}{2^{s^kt-1}})^3\cdot\frac{r}{4}&\\
+\cdots+\binom{s}{s}(\frac{B_{s^kt}}{2^{s^kt-1}})^s(\frac{r}{4})^{\frac{s-1}{2}}&\text{if $s$ is odd,}\\
s(\frac{A_{s^kt}}{2^{s^kt}})^{s-1}\cdot\frac{B_{s^kt}}{2^{s^kt-1}}+\binom{s}{3}(\frac{A_{s^kt}}{2^{s^kt}})^{s-3}(\frac{B_{s^kt}}{2^{s^kt-1}})^3\cdot\frac{r}{4}&\\
+\cdots+\binom{s}{s-1}\frac{A_{s^kt}}{2^{s^kt}}(\frac{B_{s^kt}}{2^{s^kt-1}})^{s-1}(\frac{r}{4})^{\frac{s-2}{2}}&\text{if $s$ is even.}\\
\end{array}\right.
\end{eqnarray}
It follows from (\ref{AnBn def}) that
\begin{eqnarray}\label{integer}
\frac{A_n}{2^n}=\left\{\begin{array}{ll}
\binom{n}{0}(\frac{p}{2})^n+\binom{n}{2}(\frac{p}{2})^{n-2}\cdot\frac{r}{4}+\cdots+\binom{n}{n}(\frac{r}{4})^{\frac{n}{2}}&\text{if $n$ is even,}\\
\binom{n}{0}(\frac{p}{2})^n+\binom{n}{2}(\frac{p}{2})^{n-2}\cdot\frac{r}{4}+\cdots+\binom{n}{n-1}\frac{p}{2}(\frac{r}{4})^{\frac{n-1}{2}}&\text{if $n$ is odd,}
\end{array}\right.
\end{eqnarray}
for all $n\ge0$, which implies that $\frac{A_{s^kt}}{2^{s^kt}}$ is an integer. Since Theorem \ref{main} (1) implies $s^k\mid G_{s^kt}$, we get $s^{k+2}\mid G_{s^kt}^3$ ($=(\frac{B_{s^kt}}{2^{s^kt-1}})^3$). It follows from $s^{k+2}\mid G_{s^{k+1}t}$ and (\ref{Bsskt2}) that $s^{k+2}\mid s(\frac{A_{s^kt}}{2^{s^kt}})^{s-1}\cdot\frac{B_{s^kt}}{2^{s^kt-1}}$ and then
\begin{eqnarray}\label{s k+1 div}
s^{k+1}\mid(\frac{A_{s^kt}}{2^{s^kt}})^{s-1}\cdot\frac{B_{s^kt}}{2^{s^kt-1}}.
\end{eqnarray}
Since (\ref{integer}) implies
$$\frac{A_{s^kt}}{2^{s^kt}}=(\frac{p}{2})^{s^kt}+c(p,r,s,k,t)\quad\text{where }\frac{r}{4}\mid c(p,r,s,k,t),$$
by $s\mid\frac{r}{4}$ and $(s,\frac{p}{2})=1$, we get $(s,\frac{A_{s^kt}}{2^{s^kt}})=1$. It follows from (\ref{s k+1 div}) that $s^{k+1}\mid \frac{B_{s^kt}}{2^{s^kt-1}}$ ($=G_{s^kt}$), which contradicts the inductive hypothesis.
\end{itemize}
\textcircled{\footnotesize{2}} In the same way as the proof of (\underline{Case 1}) (2) $\boxed{\Leftarrow}$ \textcircled{\footnotesize{2}}, we know that for all integers $k,n\ge0$, $s^k\mid G_n$ implies $s^k\mid n$.
\end{itemize}
(\underline{Case 3}) Suppose that $(p,q)=1$ and $s$ is a prime satisfying $s\mid r$.
\newline If $p$ is odd, the conclusions follow immediately from (\underline{Case 1}). We only need to consider that $p$ is even in the following. By $(p,q)=1$ we get $(\frac{p}{2},q)=1$. Since $2\mid p$ implies $4\mid r$, by $s\mid r$ we get $s\mid2^2\cdot\frac{r}{4}$. Noting that $s$ is a prime, it follows that $s\mid 2$ or $s\mid\frac{r}{4}$. If $s\mid\frac{r}{4}$, the conclusions follow immediately from (\underline{Case 2}). In the following, we only need to consider $s\nmid\frac{r}{4}$ and $s\mid2$, which imply $s=2$ and $2\nmid\frac{r}{4}$, i.e., $2\nmid(\frac{p}{2})^2+q$. Since $(p,q)=1$ and $2\mid p$ imply $2\nmid q$, we get $2\mid(\frac{p}{2})^2$ and then $2\mid\frac{p}{2}$.
\begin{itemize}
\item[(1)] Prove that for all $n\ge0$, $2\mid n$ if and only if $2\mid G_n$.
\newline$\boxed{\Rightarrow}$ follows directly from Theorem \ref{main} (1).
\newline$\boxed{\Leftarrow}$ Suppose $2\mid G_n$. By (\ref{Gn expan}), $2\mid\frac{p}{2}$ and $2\nmid\frac{r}{4}$, we know that $n$ must be even.
\item[(2)] Suppose $2^2\nmid G_{2t}$ for all odd $t\in\N$. We prove that for all integers $k,n\ge0$, $2^k\mid n$ if and only if $2^k\mid G_n$.
\newline$\boxed{\Rightarrow}$ follows directly from Theorem \ref{main} (1).
\newline$\boxed{\Leftarrow}$ \textcircled{\footnotesize{1}} First we prove that for all odd $t\in\N$, we have $2^{k+1}\nmid G_{2^kt}$ for all $k\ge0$ by induction.
\begin{itemize}
\item[i)] For $k=0$, $2\nmid G_t$ follows from (1) $\boxed{\Leftarrow}$.
\item[ii)] For $k=1$, $2^2\nmid G_{2t}$ follows from the assumption for odd $t\in\N$.
\item[iii)] Assume that for some $k\in\N$, we have already had $2^{k+1}\nmid G_{2^kt}$ for all odd $t\in\N$. It suffices to prove $2^{k+2}\nmid G_{2^{k+1}t}$ in the following by contradiction. Assume $2^{k+2}\mid G_{2^{k+1}t}$ for some odd $t\in\N$. Since (\ref{Bsskt2}) implies $G_{2^{k+1}t}=2\cdot\frac{A_{2^kt}}{2^{2^kt}}\cdot\frac{B_{2^kt}}{2^{2^kt-1}}=2\cdot\frac{A_{2^kt}}{2^{2^kt}}\cdot G_{2^kt}$, we get
$$2^{k+1}\mid\frac{A_{2^kt}}{2^{2^kt}}\cdot G_{2^kt}$$
where $\frac{A_{2^kt}}{2^{2^kt}}$ is an integer. Since (\ref{integer}), $2\mid\frac{p}{2}$ and $2\nmid\frac{r}{4}$ imply $2\nmid\frac{A_{2^kt}}{2^{2^kt}}$, we get $2^{k+1}\mid G_{2^kt}$, which contradicts the inductive hypothesis.
\end{itemize}
\textcircled{\footnotesize{2}} In the same way as the proof of (\underline{Case 1}) (2) $\boxed{\Leftarrow}$ \textcircled{\footnotesize{2}}, we know that for all integers $k,n\ge0$, $2^k\mid G_n$ implies $2^k\mid n$.
\end{itemize}
\end{proof}

\begin{proof}[Proof of Theorem \ref{main} (2)]\indent
\newline\textcircled{\footnotesize{1}} Suppose that $p$ is odd, $(p,q)=1$, $s\in\N$ satisfying $s\mid r$, and $3\nmid q+1$ or $3\nmid s$. By Theorem \ref{main2} (2), it suffices to prove that for all $t\in\N$ satisfying $s\nmid t$, we have $s^2\nmid G_{st}$.
\newline(By contradiction) Assume $s^2\mid G_{st}$. Then $s^2\mid B_{st}$. Recall from the proof of Theorem \ref{main2} (\underline{Case 1}) that $(s,p)=1$ and $s$ is odd. Since $s\nmid t$ implies $s\neq1$, it follows that $s\ge3$. By (\ref{Bsn}) we get
$$B_{st}=sA_t^{s-1}B_t+\binom{s}{3}A_t^{s-3}B_t^3r+\binom{s}{5}A_t^{s-5}B_t^5r^2+\cdots+\binom{s}{s}B_t^sr^{\frac{s-1}{2}}.$$
It follows from $s^2\mid B_{st}$ and $s\mid r$ that
\begin{eqnarray}\label{s divides}
s\mid A_t^{s-1}B_t+\binom{s}{3}A_t^{s-3}B_t^3\cdot\frac{r}{s}.
\end{eqnarray}
Since (\ref{AnBn def}) implies
$$A_t=p^t+c_1(p,r,t)\quad\text{where }r\mid c_1(p,r,t)$$
and
$$B_t=tp^{t-1}+c_2(p,r,t)\quad\text{where }r\mid c_2(p,r,t),$$
by $s\mid r$ and (\ref{s divides}) we get
$$s\mid(p^t)^{s-1}(tp^{t-1})+\binom{s}{3}(p^t)^{s-3}(tp^{t-1})^3\cdot\frac{r}{s},$$
$$\text{i.e.,}\quad s\mid p^{st-3}\Big(tp^2+t^3\cdot\frac{r(s-1)(s-2)}{6}\Big).$$
It follows from $(s,p)=1$ that
\begin{eqnarray}\label{s div}
s\mid tp^2+t^3\cdot\frac{r(s-1)(s-2)}{6}.
\end{eqnarray}
Recall the condition $3\nmid q+1$ or $3\nmid s$. If $3\nmid s$, then $3\mid s-1$ or $3\mid s-2$. It follows from $3\mid(s-1)(s-2)$ and $2\mid(s-1)(s-2)$ that $\frac{(s-1)(s-2)}{6}\in\N$. By (\ref{s div}) and $s\mid r$, we get $s\mid tp^2$. It follows from $(s,p)=1$ that $s\mid t$, which contradicts $s\nmid t$. Thus we only consider $3\mid s$ in the following. By $(s,p)=1$ we get $3\nmid p$, and then $p\equiv\pm1$ mod $3$. It follows that $r=p^2+4q\equiv1+q$ mod $3$. Since $3\mid s$ implies $3\mid r$, we get $3\mid q+1$. This contradicts the condition $3\nmid q+1$ or $3\nmid s$.
\newline\textcircled{\footnotesize{2}} Suppose that $p$ is even, $(\frac{p}{2},q)=1$, $s\in\N$ satisfying $s\mid\frac{r}{4}$, and $3\nmid q+1$ or $3\nmid s$. By Theorem \ref{main2} (2), it suffices to prove that for all $t\in\N$ satisfying $s\nmid t$, we have $s^2\nmid G_{st}$.
\newline(By contradiction) Assume $s^2\mid G_{st}$ ($=\frac{B_{st}}{2^{st-1}}$). Recall from the proof of Theorem \ref{main2} (\underline{Case 2}) that $(s,\frac{p}{2})=1$. By (\ref{Bsn}) we get
\begin{eqnarray}\label{Bst}
G_{st}=\left\{\begin{array}{rl}
s(\frac{A_t}{2^t})^{s-1}\cdot\frac{B_t}{2^{t-1}}+\binom{s}{3}(\frac{A_t}{2^t})^{s-3}(\frac{B_t}{2^{t-1}})^3\cdot\frac{r}{4}+\binom{s}{5}(\frac{A_t}{2^t})^{s-5}(\frac{B_t}{2^{t-1}})^5(\frac{r}{4})^2&\\
+\cdots+\binom{s}{s}(\frac{B_t}{2^{t-1}})^s(\frac{r}{4})^{\frac{s-1}{2}}&\text{if $s$ is odd,}\\
s(\frac{A_t}{2^t})^{s-1}\cdot\frac{B_t}{2^{t-1}}+\binom{s}{3}(\frac{A_t}{2^t})^{s-3}(\frac{B_t}{2^{t-1}})^3\cdot\frac{r}{4}+\binom{s}{5}(\frac{A_t}{2^t})^{s-5}(\frac{B_t}{2^{t-1}})^5(\frac{r}{4})^2&\\
+\cdots+\binom{s}{s-1}\frac{A_t}{2^t}(\frac{B_t}{2^{t-1}})^{s-1}(\frac{r}{4})^{\frac{s-2}{2}}&\text{if $s$ is even.}\\
\end{array}\right.
\end{eqnarray}
Since (\ref{integer}) implies that $\frac{A_t}{2^t}$ is an integer, it follows from (\ref{Bst}), $s^2\mid G_{st}$ and $s\mid\frac{r}{4}$ that
\begin{eqnarray}\label{s div r}
s\mid(\frac{A_t}{2^t})^{s-1}\cdot\frac{B_t}{2^{t-1}}+\binom{s}{3}(\frac{A_t}{2^t})^{s-3}(\frac{B_t}{2^{t-1}})^3\cdot\frac{r}{4s}.
\end{eqnarray}
Noting that (\ref{integer}) implies
$$\frac{A_t}{2^t}=(\frac{p}{2})^t+c_1(p,r,t)\quad\text{where }\frac{r}{4}\mid c_1(p,r,t)$$
and (\ref{Gn expan}) implies
$$\frac{B_t}{2^{t-1}}=t(\frac{p}{2})^{t-1}+c_2(p,r,t)\quad\text{where }\frac{r}{4}\mid c_2(p,r,t),$$
by $s\mid\frac{r}{4}$ and (\ref{s div r}) we get
$$s\mid(\frac{p}{2})^{t(s-1)}\cdot t(\frac{p}{2})^{t-1}+\binom{s}{3}(\frac{p}{2})^{t(s-3)}(t(\frac{p}{2})^{t-1})^3\cdot\frac{r}{4s},$$
$$\text{i.e.,}\quad s\mid(\frac{p}{2})^{st-3}\Big(t(\frac{p}{2})^2+t^3\cdot\frac{r}{4}\cdot\frac{(s-1)(s-2)}{6}\Big).$$
It follows from $(s,\frac{p}{2})=1$ that
\begin{eqnarray}\label{s div t}
s\mid t(\frac{p}{2})^2+t^3\cdot\frac{r}{4}\cdot\frac{(s-1)(s-2)}{6}.
\end{eqnarray}
In a similar way as the end of \textcircled{\footnotesize{1}}, the contradiction follows.
\newline\textcircled{\footnotesize{3}} Suppose that $(p,q)=1$, $s\ge3$ is a prime satisfying $s\mid r$, and $3\nmid q+1$ or $3\nmid s$. If $p$ is odd, the conclusion follows immediately from \textcircled{\footnotesize{1}}. If $p$ is even, in the same way as the proof of Theorem \ref{main2} (\underline{Case 3}), we get $(\frac{p}{2},q)=1$ and $s\mid\frac{r}{4}$ by the condition $s\ge3$. Then the conclusion follows immediately from \textcircled{\footnotesize{2}}.
\end{proof}

\begin{proof}[Proof of Corollary \ref{square}] Suppose that
$$p\text{ is odd, }(p,q)=1\text{ and }s^2\mid r$$
or
$$p\text{ is even, }(\frac{p}{2},q)=1\text{ and }s^2\mid\frac{r}{4}.$$
By Theorem \ref{main2} (1), we know that for all integers $n\ge0$,
\begin{eqnarray}\label{square iff}
s^2\mid n\quad\text{if and only if}\quad s^2\mid G_n.
\end{eqnarray}
In order to complete the proof, it suffices to check the condition of Theorem \ref{main2} (2). In fact, for all $t\in\N$ satisfying $s\nmid t$, we have $s^2\nmid st$. It follows from (\ref{square iff}) that $s^2\nmid G_{st}$.
\end{proof}

Corollary \ref{classical} (1), (2) and (3) \textcircled{\footnotesize{1}} follow from Theorem \ref{main}, while (3) \textcircled{\footnotesize{2}} follows from Corollary \ref{square}.

Corollary \ref{q=1} follows immediately from taking $q=1$ in Theorem \ref{main} (2).

\begin{proof}[Proof of Corollary \ref{p=1 2}] Let $q\in\Z$ and $s\in\N$. If $(G_n)_{n\ge0}$ is the $<1,q>$-Fibonacci sequence and $s\mid4q+1$, or $(G_n)_{n\ge0}$ is the $<2,q>$-Fibonacci sequence and $s\mid q+1$ with $q\neq-1$, then the conclusions follow directly from Theorem \ref{main2} (1) and Theorem \ref{main} (2). If $(G_n)_{n\ge0}$ is the $<2,-1>$-Fibonacci sequence, it is straightforward to get $G_n=n$ for all $n\ge0$, and then the conclusions follow.
\end{proof}

\begin{proof}[Proof of Corollary \ref{r}] Let $p\in\Z$, $q\in\N$ and $r=p^2+4q\ge4$.
\newline(1) Suppose that $r$ is a prime. Then we have the following.
\begin{itemize}
\item[\textcircled{\footnotesize{1}}] $p$ is odd, since if $p$ is even, then $4\mid r$ will contradict that $r$ is a prime;
\item[\textcircled{\footnotesize{2}}] $(p,q)=1$, since $(p,q)\mid r$, $r$ is a prime and $(p,q)\le q<4q\le r$.
\item[\textcircled{\footnotesize{3}}] $3\nmid r$, since $r$ is a prime and $r\ge4$.
\end{itemize}
By taking $s=r$ in Theorem \ref{main} (2), the conclusion follows.
\newline(2) Suppose that $4\mid r$, $\frac{r}{4}=(\frac{p}{2})^2+q$ is a prime and $p\neq0$.
\begin{itemize}
\item[\textcircled{\footnotesize{1}}] $p$ is even, since $4\mid r$.
\item[\textcircled{\footnotesize{2}}] $(\frac{p}{2},q)=1$, since $(\frac{p}{2},q)\mid\frac{r}{4}$, $\frac{r}{4}$ is a prime and $(\frac{p}{2},q)\le q<\frac{r}{4}$.
\end{itemize}
Let $s=\frac{r}{4}$. If $3\nmid s$, the conclusion follows immediately from \textcircled{\footnotesize{1}}, \textcircled{\footnotesize{2}} and Theorem \ref{main} (2). We only need to consider $3\mid s$ in the following. Since $s$ is a prime, we get $s=3$. It follows from $s=(\frac{p}{2})^2+q$, $p\in\Z\setminus\{0\}$ and $q\in\N$ that $(\frac{p}{2})^2=1$ and $q=2$. In order to complete the proof, by Theorem \ref{main2} (2), it suffices to check that for all $t\in\N$ satisfying $3\nmid t$, we have $3^2\nmid G_{3t}$.
\newline(By contradiction) Assume $3^2\mid G_{3t}$. In the same way as the proof of Theorem \ref{main} (2) \textcircled{\footnotesize{2}}, we get (\ref{s div t}). That is, $3\mid t+t^3$. By $3\nmid t$, there exists integer $m\ge0$ such that $t=3m+1$ or $3m+2$. Thus $3\mid(3m+1)+(3m+1)^3$ or $(3m+2)+(3m+2)^3$, which implies $3\mid1+1$ or $2+8$. This is impossible.
\end{proof}

\section{Further questions}

In Theorem \ref{main}, \ref{main2} and Corollary \ref{square}, \ref{q=1}, \ref{p=1 2}, we give sufficient conditions (on $p,q,r,s$) for the equivalences of
$$s\mid n\quad\text{and}\quad s\mid G_n\quad\text{for all integers }n\ge0,$$
and
$$s^k\mid n\quad\text{and}\quad s^k\mid G_n\quad\text{for all integers }k,n\ge0.$$
What are the necessary and sufficient conditions for these equivalences?

\medskip

\noindent MSC2010: 11B39

\end{document}